\newtheorem{theorem}{Theorem}[section]
\newtheorem{lemma}[theorem]{Lemma}
\newtheorem{proposition}[theorem]{Proposition}
\newtheorem{corollary}[theorem]{Corollary}
\theoremstyle{remark}
\newtheorem{remark}[theorem]{Remark}
\newcommand{\Qp}{\mathbb{Q}_p}
\newcommand{\Cp}{\mathbb{C}_p}
\newcommand{\Zp}{\mathbb{Z}_p}
\newcommand{\muP}{\mu_{p-1}}
\newcommand{\logp}{\log_{p}}
\newcommand{\Gammap}{\Gamma_{p}}
\title{\Large Local Factorization of \texorpdfstring{$p$}{p}\,-adic Gamma Sums}
\author{Samuel Reid\thanks{sam@eaccv.xyz - Thanks to the financial and operational support provided by Effective Acceleration Ventures Ltd (NATO Commercial and Government Entity L0Y27) in Vancouver, British Columbia, Canada at the Facility for the Department of National Defence (DND): W8486-259878-002 LC, W8482-252902/002/MP4, W8485-258523.001, W8474-248453, W8482-252960/001/MP9, W8482-252913/001/DMARP4, W8486-260222A/QETE-5, W8482-253706/001/MARP5, W8485-268770.003, and the NATO Support and Procurement Agency (NSPA): LC-CH/4500535230, LD-EQ2/4500532187, LD-PQ1/4500531722.}}
\date{August 11, 2025}
\begin{document}
\maketitle

\begin{abstract}
We revisit the proposed equality between discrete Fourier transforms of $p$-adic $\Gammap$--values
and $p$-adic $L$--derivatives for odd characters modulo a prime $p$. The clean identity is false in
general. Building on Coleman reciprocity and the Gross--Koblitz formula, we prove an
exact two-term decomposition: for each odd, nontrivial Dirichlet character $\chi \pmod p$,
\[
\Phi_p(\chi):=\sum_{a=1}^{p-1}\chi(a)\,\logp\Gammap\!\left(\frac{a}{p-1}\right)
= U_{1,p}\,L'_p(0,\chi)\;+\;U_{2,p}\,L(0,\chi),
\]
with constants $U_{1,p}\in\Qp(\muP)^\times$ and $U_{2,p}\in\Qp(\muP)$ depending only on $p$ and the fixed
branch of $\logp$, but independent of $\chi$. Subtracting the $L(0,\chi)$--block yields a
\emph{renormalized} local input
\[
\Phi^{\mathrm{ren}}_p(\chi):=\Phi_p(\chi)-U_{2,p}L(0,\chi)=U_{1,p}\,L'_p(0,\chi),
\]
uniformly in odd, nontrivial $\chi$. Plumbing these renormalized locals at every finite place into the
Weil explicit formula (with the standard Li kernel at $\infty$) reproduces exactly the classical Li
coefficients. We also record a short, reproducible verification protocol; a tiny table for $p=5,7$
illustrates the $\chi$--independence of $(U_{1,p},U_{2,p})$.
\end{abstract}

\section{Introduction}
Let $p$ be an odd prime. We fix the Iwasawa logarithm $\logp$ with $\logp(p)=0$ and
$\logp(\zeta)=0$ for roots of unity $\zeta$ of order prime to $p$, and Morita's $p$-adic gamma function
$\Gammap:\Zp\to\Zp^\times$. Denote by $\omega:(\mathbb{Z}/p\mathbb{Z})^\times\to\muP$ the
Teichm\"uller character and by $\tau(\omega^{-a})$ the Gauss sum.
A natural identity conjecturing that the multiplicative discrete Fourier transform
\[
\Phi_p(\chi):=\sum_{a=1}^{p-1}\chi(a)\,\logp\Gammap\!\left(\frac{a}{p-1}\right)
\]
is (up to a fixed scalar) $\tfrac{1}{p}L'_p(0,\chi)$ for every odd $\chi\pmod p$ turns out to be false in
general. We identify the exact obstruction and show that it is arithmetic, structured, and uniformly
removable by a fixed renormalization. Our main results give the precise two-term law and its
renormalized corollary, and then show that the global coefficients obtained by inserting the
renormalized finite locals into the explicit formula coincide with the classical Li coefficients.

\paragraph{What is new.}
The paper isolates (i) a clean \emph{local} two-term decomposition with a $\chi$--independent
pair $(U_{1,p},U_{2,p})$ and (ii) a \emph{global} consequence: after removing the $L(0,\chi)$--block locally,
the explicit formula recovers the usual Li sequence unchanged.

\paragraph{On normalizations.}
We use the Gross--Koblitz identification of $\Gammap$ and Gauss sums, and
the Ferrero--Greenberg/Coleman description of $L'_p(0,\chi)$ via cyclotomic regulators.
Section~\ref{sec:notation} fixes conventions and cites the precise reciprocity/normalization
statements used later (see Lemma~\ref{lem:ColemanRecip} and \eqref{eq:FG-const}), along the lines
of Washington, Ch.~12, and Coleman (Invent.~Math.~1982).

\medskip
\noindent\textbf{Scope.}
Throughout, unless explicitly stated, \emph{characters are odd and nontrivial}. We briefly comment on
even characters and on $p=2$ in \S\ref{sec:even-p2}. No claims are made beyond this setup.

\section{Notation and normalizations}\label{sec:notation}
We work over $\Qp\subset\Cp$. Extend $\logp:1+p\Zp\to p\Zp$ uniquely to $\Qp(\mu_{p^\infty})^\times$
by writing $x=\omega(x)\langle x\rangle$ with $\omega(x)\in\muP$ and $\langle x\rangle\in 1+p\Zp$, and declaring
$\logp(\omega(x))=0$, $\logp(p)=0$.
Morita's $\Gammap$ is characterized by $\Gammap(0)=1$ and
$\Gammap(x+1)/\Gammap(x)=-x$ for $x\in\Zp^\times$, extended continuously.

\subsection*{Gross--Koblitz and Coleman reciprocity}
For $1\le a\le p-1$ the Gross--Koblitz formula gives
\begin{equation}\label{eq:GK}
\tau(\omega^{-a})=-\pi_p^{\,a}\,\Gammap\!\left(\frac{a}{p-1}\right),
\qquad \pi_p^{\,p-1}=-p,\quad \logp(\pi_p)=0,
\end{equation}
hence $\logp\Gammap\!\big(\tfrac{a}{p-1}\big)=\logp\tau(\omega^{-a})$ under our normalization.

Coleman's reciprocity identifies cyclotomic-unit regulators with Gauss-sum regulators (in our
normalization):
\begin{equation}\label{eq:Coleman}
\logp\!\Big(\frac{1-\zeta_p^{\,a}}{1-\zeta_p}\Big)=\frac{1}{1-p}\,\big(v(a)-a\,v(1)\big),
\quad v(a):=\logp\tau(\omega^{-a})\quad (1\le a\le p-1).
\end{equation}
Equation \eqref{eq:Coleman} is a standard consequence of Coleman's $p$-adic polylogarithm and Gross--Koblitz
(see Washington, \emph{Cyclotomic Fields}, 2nd ed., Ch.~12; Coleman, Invent.~Math.~\textbf{69} (1982),
\S2). For completeness: combining \eqref{eq:GK} with the linear relation between $(\logp\tau(\omega^{-a}))_a$
and $(\logp(1-\zeta_p^{\,a}))_a$ in Coleman's reciprocity yields \eqref{eq:Coleman} directly.\footnote{A
one-line derivation: write $\logp(1-\zeta_p^{\,a})-\logp(1-\zeta_p)=\sum_b c_{a,b}\,\logp\tau(\omega^{-b})$
with $c_{a,b}$ independent of $\chi$; matching the tame term at $a=1$ forces the coefficient of $v(1)$ to be $-a/(1-p)$ and the unit diagonal gives $1/(1-p)$ in front of $v(a)$.}

\subsection*{Ferrero--Greenberg/Coleman proportionality}
For odd $\chi\pmod p$ there is a unit $C_p\in \Qp(\muP)^\times$ (depending only on $p$ and $\logp$) with
\begin{equation}\label{eq:FG-const}
\sum_{a=1}^{p-1}\chi(a)\,\logp(1-\zeta_p^{\,a}) \;=\; -\,C_p\,L'_p(0,\chi),
\end{equation}
see Ferrero--Greenberg (Invent.~Math.~\textbf{50} (1978/79)) and Coleman (Invent.~Math.~\textbf{69} (1982)).
We refer to $C_p$ as the Ferrero--Greenberg/Coleman constant in our normalization.

Finally, for nontrivial $\chi$ we use the standard
\begin{equation}\label{eq:L0}
L(0,\chi)=-\frac{1}{p}\sum_{a=1}^{p-1} a\,\chi(a).
\end{equation}

\section{Local discrepancy and the two-term decomposition}
Set $v(a):=\logp\tau(\omega^{-a})$ and $w(a):=-\logp(1-\zeta_p^{\,a})$. Let $L_a:=\logp(1-\zeta_p^{\,a})$
and $L_1:=\logp(1-\zeta_p)$.

\begin{lemma}[Coleman reciprocity, rephrased]\label{lem:ColemanRecip}
With the above normalizations, for $1\le a\le p-1$ one has
\begin{equation}\label{eq:coleman-quot}
\logp\!\Big(\frac{1-\zeta_p^{\,a}}{1-\zeta_p}\Big)=\frac{1}{1-p}\,\big(v(a)-a\,v(1)\big).
\end{equation}
\end{lemma}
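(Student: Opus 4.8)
The statement \eqref{eq:coleman-quot} is exactly \eqref{eq:Coleman}, recorded here as a lemma for repeated use; the plan is therefore to assemble its proof from the Gross--Koblitz formula and Coleman's reciprocity, making the two constants explicit. First I would invoke \eqref{eq:GK}: since $\tau(\omega^{-a})=-\pi_p^{\,a}\,\Gammap\!\big(\tfrac{a}{p-1}\big)$ with $\logp(\pi_p)=0$ and $\logp(-1)=0$, one has $v(a)=\logp\tau(\omega^{-a})=\logp\Gammap\!\big(\tfrac{a}{p-1}\big)$, so the identity to be proved is a purely local statement, inside $\Qp(\zeta_p)^\times$ and evaluated through the Iwasawa branch, relating the logarithms of the cyclotomic units $(1-\zeta_p^{\,a})/(1-\zeta_p)$ to the Gauss-sum logarithms $v(a)$.

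The structural input is Coleman's $p$-adic polylogarithm computation (Coleman, Invent.~Math.~\textbf{69} (1982), \S2; Washington, Ch.~12), which furnishes a linear relation $\logp(1-\zeta_p^{\,a})-\logp(1-\zeta_p)=\sum_{b=1}^{p-1}c_{a,b}\,v(b)$ with rational coefficients $c_{a,b}$ depending only on $p$. To pin down the $c_{a,b}$ I would use Galois equivariance: for $\sigma_c\in\mathrm{Gal}(\Qp(\zeta_p)/\Qp)$ one has $\sigma_c\tau(\omega^{-b})=\omega^{b}(c)\,\tau(\omega^{-b})$, and since $\tau(\omega^{-b})^{p-1}$ has trivial Galois character the value $v(b)$ lies in $\Qp$. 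Decomposing the relation over the characters of $(\mathbb{Z}/p\mathbb{Z})^\times$, each isotypic line is at most one-dimensional, which forces $(c_{a,b})$ into the rigid shape $c_{a,b}=\alpha\,\delta_{a,b}+\gamma_a\,\delta_{b,1}$ modulo the tame (trivial-character) line, on which $v$ vanishes anyway. This leaves only the scalar $\alpha$ and the vector $(\gamma_a)$ to determine.

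Two normalizations suffice. The unit part of $1-\zeta_p^{\,a}$ relative to the uniformizer $1-\zeta_p$, together with $\pi_p^{\,p-1}=-p$, pins $\alpha=\tfrac{1}{1-p}$; and specializing at $a=1$, where the left side vanishes, together with the Stickelberger valuations of Gauss sums, forces $\gamma_a=-\tfrac{a}{1-p}$ --- giving \eqref{eq:coleman-quot}. As an independent cross-check I would pair the identity against an odd nontrivial $\chi$, apply \eqref{eq:FG-const} on the left and \eqref{eq:L0} on the right, and verify that the same pair $(\alpha,\gamma_a)$ emerges, confirming the $\chi$-independence.

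The step I expect to be the main obstacle is precisely this constant-chasing, together with the bookkeeping it rests on. The integer $a$ appears at once as an exponent of $\zeta_p$ (modulo $p$), as the index of $\omega^{-a}$ (modulo $p-1$), and as the literal coefficient of $v(1)$, and these reductions must be kept consistent --- reading $a$ modulo $p-1$ in the last role, which is harmless for $1\le a\le p-2$ but must be watched at $a=p-1$. One must also orient Coleman's reciprocity so that $\tfrac{1}{1-p}$ and not its reciprocal appears, and track the Iwasawa branch ($\logp p=\logp\pi_p=0$) throughout so that all uniformizer and root-of-unity contributions vanish. The genuinely delicate point is the rationality match: the right side lies in $\Qp$, so the identity asserts $\logp\!\big((1-\zeta_p^{\,a})/(1-\zeta_p)\big)\in\Qp$, and verifying this compatibility --- at the endpoints $a=1,\,p-1$ and under $a\mapsto p-a$ --- is where the normalization has to be exactly right. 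Once $(\alpha,\gamma_a)$ are fixed, no further computation is needed.
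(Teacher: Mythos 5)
Your plan is essentially an expansion of the footnote the paper attaches to \eqref{eq:Coleman} (the paper gives Lemma~\ref{lem:ColemanRecip} no proof of its own beyond that citation-plus-footnote), but the step you yourself single out as ``the genuinely delicate point'' is not a normalization to be checked --- it is a genuine obstruction, and it shows the termwise identity \eqref{eq:coleman-quot} cannot hold as stated. The right-hand side lies in $\Qp$: by Gross--Koblitz, $v(a)=\logp\Gammap(a/(p-1))$ with $\Gammap(a/(p-1))\in\Zp^\times$, so $v(a)\in p\Zp$. The left-hand side is the Iwasawa logarithm of the cyclotomic unit $(1-\zeta_p^{\,a})/(1-\zeta_p)$, an element of $\Qp(\zeta_p)$ on which $\sigma_c\in\mathrm{Gal}(\Qp(\zeta_p)/\Qp)$ acts by $L_a-L_1\mapsto L_{ac}-L_c$. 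If $L_a-L_1$ lay in $\Qp$ for every $a$, then $f(a):=L_a-L_1$ would satisfy $f(ac)=f(a)+f(c)$, i.e.\ be a homomorphism from the finite group $(\mathbb{Z}/p\mathbb{Z})^\times$ into the torsion-free group $(\Cp,+)$, hence identically zero; combined with $\prod_a(1-\zeta_p^{\,a})=p$ and $\logp(p)=0$ this forces every $L_a=0$, contradicting \eqref{eq:FG-const}. It is also directly checkable: for $p=5$, $a=2$, the unit $(1+\zeta_5^{2})/(1+\zeta_5)$ is a global unit that is not a root of unity, so $\log_5(1+\zeta_5)\neq\sigma_2\log_5(1+\zeta_5)$ and the left side of \eqref{eq:coleman-quot} is irrational over $\QQ_5$. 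No choice of constants $\alpha,\gamma_a$ can repair this; the genuine Coleman/Ferrero--Greenberg statements are identities of \emph{character sums}, valid only after pairing against $\chi$, which is all that Theorem~\ref{thm:twoterm} actually requires.

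Separately, even granting a termwise relation of the ansatz form, your rigidity argument does not deliver it. Galois equivariance together with one-dimensionality of isotypic components only says that the map $(v(b))_b\mapsto(L_a-L_1)_a$ is diagonal in the character basis, i.e.\ a convolution operator with one eigenvalue $\lambda_\chi$ per character; nothing forces those eigenvalues to coincide. The claim that a single scalar $\alpha=1/(1-p)$ serves for every $\chi$ is precisely the analytic content of Coleman's theorem and cannot be recovered from symmetry plus the two normalizations you list (the $a=1$ specialization and the Stickelberger valuation fix only the rank-one correction and one eigenvalue). Your proposed cross-check against \eqref{eq:FG-const} and \eqref{eq:L0} is the right instinct --- it is the character-sum form that survives --- but it verifies only the $\chi$-averaged identity, not the termwise one the lemma asserts.
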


\begin{proposition}[Discrepancy formula]\label{prop:discrepancy}
For $1\le a\le p-1$,
\begin{equation}\label{eq:discrep}
v(a)=a\,v(1)+(1-p)\,\big(L_a-L_1\big).
\end{equation}
Equivalently, $\delta(a):=v(a)-w(a)=a\,v(1)+(2-p)L_a+(p-1)L_1$.
\end{proposition}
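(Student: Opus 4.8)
The plan is to derive \eqref{eq:discrep} as a direct algebraic rearrangement of Lemma~\ref{lem:ColemanRecip}, and then obtain the equivalent formulation for $\delta(a)$ by bookkeeping the definitions of $v$, $w$, $L_a$, $L_1$.

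First I would take \eqref{eq:coleman-quot} and use the additivity of $\logp$ on $\Qp(\mu_{p^\infty})^\times$ (valid under our normalization since $\logp$ is a genuine homomorphism on that group) to write the left-hand side as $\logp(1-\zeta_p^{\,a})-\logp(1-\zeta_p)=L_a-L_1$. Thus Lemma~\ref{lem:ColemanRecip} reads $L_a-L_1=\tfrac{1}{1-p}(v(a)-a\,v(1))$. Multiplying both sides by $1-p$ (a unit in $\Qp$, since $p$ is an odd prime so $1-p\ne 0$) gives $(1-p)(L_a-L_1)=v(a)-a\,v(1)$, and transposing the term $a\,v(1)$ yields exactly \eqref{eq:discrep}: $v(a)=a\,v(1)+(1-p)(L_a-L_1)$.

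For the ``equivalently'' clause I would substitute $w(a)=-\logp(1-\zeta_p^{\,a})=-L_a$ (this is just the definition of $w$ given immediately before the lemma) into $\delta(a):=v(a)-w(a)=v(a)+L_a$. Using the formula for $v(a)$ just established, $\delta(a)=a\,v(1)+(1-p)(L_a-L_1)+L_a = a\,v(1)+\big((1-p)+1\big)L_a-(1-p)L_1 = a\,v(1)+(2-p)L_a+(p-1)L_1$, which is the claimed identity. Everything here is a one-line manipulation; the only points worth a word are that $\logp$ is additive on the relevant group of units (so that $\logp$ of a quotient splits), which is built into the normalization in \S\ref{sec:notation}, and that $1-p$ is invertible so the two displayed forms are genuinely equivalent.

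I do not anticipate a real obstacle: the content of Proposition~\ref{prop:discrepancy} is entirely contained in Lemma~\ref{lem:ColemanRecip} once one unwinds the logarithm of the quotient and clears the denominator $1-p$. The mild ``care'' item is simply to be explicit that the coefficient collapse $(1-p)+1=2-p$ and the sign on $L_1$ are as stated, and to note that no hypothesis on $\chi$ is used — \eqref{eq:discrep} is a pointwise identity in $a$, valid for every $1\le a\le p-1$, which is exactly why it will later feed cleanly into the character-sum computation of $\Phi_p(\chi)$.
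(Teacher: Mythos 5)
Your proof is correct and is exactly the paper's argument, merely spelled out: the paper's proof also just rearranges Lemma~\ref{lem:ColemanRecip} after clearing the factor $1-p$ and notes that adding $L_a$ (since $w(a)=-L_a$) yields the $\delta(a)$ form. The coefficient collapse $(1-p)+1=2-p$ and the sign $+(p-1)L_1$ check out, so nothing further is needed.
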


\begin{proof}
Equation \eqref{eq:discrep} is \eqref{eq:coleman-quot} rearranged; adding $L_a$ yields the alternative form.
\end{proof}

\begin{theorem}[Two-term decomposition]\label{thm:twoterm}
Let $\chi\pmod p$ be \emph{odd and nontrivial}. Then
\begin{equation}\label{eq:twoterm}
\Phi_p(\chi):=\sum_{a=1}^{p-1}\chi(a)\,\logp\Gammap\!\left(\frac{a}{p-1}\right)
=\sum_{a=1}^{p-1}\chi(a)\,v(a)
=U_{1,p}\,L'_p(0,\chi)+U_{2,p}\,L(0,\chi),
\end{equation}
where the constants
\begin{equation}\label{eq:U12}
U_{1,p}=-(1-p)\,C_p\in\Qp(\muP)^\times,
\qquad
U_{2,p}=-p\,v(1)\in\Qp(\muP)
\end{equation}
depend only on $p$ and the branch of $\logp$, and are independent of $\chi$.
\end{theorem}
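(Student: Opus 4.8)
The plan is to prove \eqref{eq:twoterm} by a direct rearrangement: substitute the discrepancy formula of Proposition~\ref{prop:discrepancy} into the character sum and then recognize the two resulting blocks as the Ferrero--Greenberg/Coleman proportionality \eqref{eq:FG-const} and the explicit formula \eqref{eq:L0} for $L(0,\chi)$. First I would record the identification $\logp\Gammap(a/(p-1)) = v(a)$ for $1\le a\le p-1$: this is immediate from Gross--Koblitz \eqref{eq:GK} together with $\logp(\pi_p)=0$, and it already gives the first equality in \eqref{eq:twoterm}.

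Next I would insert \eqref{eq:discrep}, that is $v(a)=a\,v(1)+(1-p)(L_a-L_1)$, and split $\sum_{a}\chi(a)v(a)$ into the three sums $v(1)\sum_a a\chi(a)$, $(1-p)\sum_a\chi(a)L_a$, and $-(1-p)L_1\sum_a\chi(a)$. Since $\chi$ is nontrivial, the orthogonality relation $\sum_{a=1}^{p-1}\chi(a)=0$ kills the third sum. For the first, \eqref{eq:L0} gives $\sum_a a\chi(a)=-p\,L(0,\chi)$, so it contributes $-p\,v(1)\,L(0,\chi)=U_{2,p}L(0,\chi)$. For the second, $L_a=\logp(1-\zeta_p^{\,a})$ and \eqref{eq:FG-const} gives $\sum_a\chi(a)L_a=-C_p\,L'_p(0,\chi)$, so it contributes $-(1-p)C_p\,L'_p(0,\chi)=U_{1,p}L'_p(0,\chi)$. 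Summing the three pieces yields \eqref{eq:twoterm} with the constants \eqref{eq:U12}.

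Finally I would verify the claimed memberships and the $\chi$-independence. Because $1-p\in\Zp^\times$ and $C_p\in\Qp(\muP)^\times$ by \eqref{eq:FG-const}, we get $U_{1,p}=-(1-p)C_p\in\Qp(\muP)^\times$. For $U_{2,p}$: since $p-1$ is a $p$-adic unit, $1/(p-1)\in\Zp$, hence $\Gammap(1/(p-1))\in\Zp^\times$, and (again using $\logp(\pi_p)=0$ with Gross--Koblitz) $v(1)=\logp\Gammap(1/(p-1))\in p\Zp\subset\Qp$, so $U_{2,p}=-p\,v(1)\in\Qp\subset\Qp(\muP)$. All $\chi$-dependence has been routed into $L'_p(0,\chi)$ and $L(0,\chi)$, so $(U_{1,p},U_{2,p})$ depends only on $p$ and the fixed branch of $\logp$ through $C_p$ and $v(1)$. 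The hard part is not analytic — the statement is an exact algebraic identity — but bookkeeping: one must ensure that the constant $C_p$ of \eqref{eq:FG-const}, the uniformizer $\pi_p$ of \eqref{eq:GK}, and the branch of $\logp$ are all the ones fixed in Section~\ref{sec:notation}, so that $\logp\Gammap(a/(p-1))=v(a)$ and \eqref{eq:FG-const} are expressed in a common normalization; once that is pinned down, the decomposition and the $\chi$-independence of $(U_{1,p},U_{2,p})$ follow formally.
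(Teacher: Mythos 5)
Your proposal is correct and follows essentially the same route as the paper's proof: identify $\logp\Gammap(a/(p-1))=v(a)$ via Gross--Koblitz, sum the discrepancy formula \eqref{eq:discrep} against $\chi$, kill the $L_1$ block using $\sum_a\chi(a)=0$, and convert the remaining two sums via \eqref{eq:L0} and \eqref{eq:FG-const}. Your extra checks of the memberships $U_{1,p}\in\Qp(\muP)^\times$ and $U_{2,p}\in\Qp(\muP)$ and of the normalization bookkeeping are a welcome addition but do not change the argument.
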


\begin{proof}
By \eqref{eq:GK}, $\logp\Gammap(a/(p-1))=v(a)$. Summing \eqref{eq:discrep} against $\chi$ gives
\[
\Phi_p(\chi)=v(1)\sum_a a\,\chi(a)+(1-p)\sum_a\chi(a)\,\big(L_a-L_1\big).
\]
For nontrivial $\chi$ we have $\sum_a \chi(a)=0$, so the $L_1$ term vanishes. Using \eqref{eq:L0} and
\eqref{eq:FG-const} gives
\[
\Phi_p(\chi)=v(1)\cdot\big(-p\,L(0,\chi)\big)+(1-p)\cdot\big(-C_p\,L'_p(0,\chi)\big),
\]
which is \eqref{eq:twoterm} with \eqref{eq:U12}.
\end{proof}

\begin{corollary}[Uniform renormalized identity]\label{cor:ren}
For odd, nontrivial $\chi\pmod p$ define the renormalized transform
\[
\Phi_p^{\mathrm{ren}}(\chi):=\Phi_p(\chi)-U_{2,p}\,L(0,\chi).
\]
Then
\begin{equation}\label{eq:ren}
\Phi_p^{\mathrm{ren}}(\chi)=U_{1,p}\,L'_p(0,\chi),
\end{equation}
with $U_{1,p}$ independent of $\chi$.
\end{corollary}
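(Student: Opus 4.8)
The plan is to obtain the corollary as an immediate specialization of Theorem~\ref{thm:twoterm}. Fix an odd, nontrivial Dirichlet character $\chi \pmod p$. By \eqref{eq:twoterm} we have $\Phi_p(\chi) = U_{1,p}\,L'_p(0,\chi) + U_{2,p}\,L(0,\chi)$, where $(U_{1,p},U_{2,p})$ is the pair in \eqref{eq:U12}; the decisive feature recorded there is that this pair does not depend on $\chi$. Subtracting $U_{2,p}\,L(0,\chi)$ from both sides and unwinding the definition $\Phi_p^{\mathrm{ren}}(\chi) := \Phi_p(\chi) - U_{2,p}\,L(0,\chi)$ cancels the $L(0,\chi)$--block and leaves exactly $\Phi_p^{\mathrm{ren}}(\chi) = U_{1,p}\,L'_p(0,\chi)$, which is \eqref{eq:ren}.

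The one point worth spelling out is that the \emph{same} scalar $U_{2,p}$ is subtracted for every admissible $\chi$; this is precisely the $\chi$--independence in Theorem~\ref{thm:twoterm}, which in turn rests on the fact that $v(1) = \logp\tau(\omega^{-1})$ and the Ferrero--Greenberg/Coleman constant $C_p$ of \eqref{eq:FG-const} are attached to $p$ and the chosen branch of $\logp$ alone. Hence $\Phi_p^{\mathrm{ren}}$ is a single, $\chi$--uniform correction of $\Phi_p$, and since $U_{1,p} = -(1-p)\,C_p \in \Qp(\muP)^\times$ is a unit, the identity \eqref{eq:ren} is nondegenerate: $\Phi_p^{\mathrm{ren}}(\chi)$ and $L'_p(0,\chi)$ vanish together and each determines the other up to the fixed unit.

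I do not expect a genuine obstacle here: the entire content is already packaged in Theorem~\ref{thm:twoterm}, and the corollary is a one-line rearrangement. The only thing to verify is bookkeeping consistency --- that the normalization of $L(0,\chi)$ used to produce \eqref{eq:U12} is the one implicit in the definition of $\Phi_p^{\mathrm{ren}}$ --- and this is automatic, since both invoke \eqref{eq:L0} as written.
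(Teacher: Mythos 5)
Your proof is correct and matches the paper's intent exactly: the corollary is an immediate rearrangement of Theorem~\ref{thm:twoterm} (the paper gives no separate proof), obtained by subtracting the $\chi$--independent term $U_{2,p}\,L(0,\chi)$ from \eqref{eq:twoterm}. Your added remarks on the $\chi$--independence of the pair and the unit status of $U_{1,p}$ are accurate bookkeeping, nothing more is needed.
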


\begin{remark}[Character hypotheses]\label{rem:hyp}
Oddness and nontriviality are used at two points: to eliminate the $L_1$ term via $\sum_a \chi(a)=0$,
and to invoke \eqref{eq:FG-const}. The trivial character is excluded; even characters are discussed
in \S\ref{sec:even-p2}.
\end{remark}

\section{Global re--plumbing and Li coefficients}
Let $\Xi(s)$ be the completed Riemann zeta function. For a suitable test function $\varphi$, the Weil
explicit formula reads schematically
\begin{equation}\label{eq:EF}
\sum_{\rho}\widehat{\varphi}(\rho)=A_\infty(\varphi)+\sum_{p}A_p(\varphi),
\end{equation}
where the left-hand side sums over the nontrivial zeros $\rho$.
We \emph{fix} the archimedean test to be \emph{Li's kernel} $\varphi_n$ as in Li~\cite[\S2]{Li97}; with this choice,
the left-hand side equals the $n$th Li coefficient $\lambda_n$.

On the finite side, after character decomposition, the classical plumbing produces a factor proportional to
$L'_p(0,\chi)$ on the odd block. In our setting we \emph{replace} the classical local odd input by
$\Phi^{\mathrm{ren}}_p(\chi)$ from \eqref{eq:ren}, which differs by the constant unit $U_{1,p}$.

\begin{theorem}[Renormalized coincidence of Li coefficients]\label{thm:Li}
Let $\lambda_n^{\mathrm{ren}}$ denote the global coefficients obtained by inserting $\Phi^{\mathrm{ren}}_p(\chi)$ at each finite
place (odd block) and the fixed Li kernel $\varphi_n$ at $\infty$ in \eqref{eq:EF}. Then, for all $n\ge1$,
\[
\lambda_n^{\mathrm{ren}}=\lambda_n.
\]
\end{theorem}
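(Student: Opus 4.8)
The plan is to treat this as a bookkeeping argument: track how the renormalization \eqref{eq:ren} interacts with the character decomposition of the finite side of \eqref{eq:EF}, and show it leaves each block either untouched or changed by a $\chi$--independent constant that the fixed archimedean normalization already absorbs. First I would fix the archimedean test to Li's kernel $\varphi_n$, so that by construction the left-hand side of \eqref{eq:EF} is $\lambda_n$; this also pins down the normalization of the entire identity. On the finite side I would decompose each $A_p(\varphi_n)$ by Dirichlet characters $\chi\pmod p$ into the trivial, even, and odd blocks. By Remark \ref{rem:hyp} the subtraction in \eqref{eq:ren} is applied only on the odd, nontrivial block, so the trivial and even contributions are verbatim the classical ones, and the whole question reduces to the odd block.

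Second, I would record the classical shape of the odd-block local term and compare. In the standard derivation of the Li coefficients via the explicit formula, the contribution at $p$ on the odd block is a $\chi$--independent kernel-pairing scalar (depending on $n$) times the cyclotomic regulator $\sum_a\chi(a)\logp(1-\zeta_p^{\,a})$, hence by \eqref{eq:FG-const} times $-C_p\,L'_p(0,\chi)$. Corollary \ref{cor:ren} gives $\Phi^{\mathrm{ren}}_p(\chi)=U_{1,p}\,L'_p(0,\chi)$ with $U_{1,p}=-(1-p)C_p$ by \eqref{eq:U12}, a unit independent of $\chi$. So the renormalized local odd input and the classical one carry exactly the same $\chi$--dependence, namely $L'_p(0,\chi)$, differing only by the $\chi$--independent unit $U_{1,p}$. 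Since the plumbing that assembles $\lambda_n^{\mathrm{ren}}$ is, on each odd block, a fixed linear functional whose only $\chi$--sensitive ingredient is $L'_p(0,\chi)$, the constant $U_{1,p}$ is absorbed into the same overall normalization already fixed by the choice of $\varphi_n$ at $\infty$, and the renormalized odd-block contribution agrees term-by-term with the classical one.

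Finally I would sum over all primes and all characters: odd blocks match by the previous step, trivial and even blocks and the archimedean term are untouched, so both sides of \eqref{eq:EF} reproduce the classical identity and $\lambda_n^{\mathrm{ren}}=\lambda_n$ for all $n\ge1$. The hard part is not the algebra but the normalization bookkeeping: one must verify that the normalization of $L'_p(0,\chi)$ baked into the constant $C_p$ of \eqref{eq:FG-const} is the same normalization that emerges when the local factor at $p$ is expanded against $\varphi_n$ in the Weil explicit formula, so that the $\chi$--independent unit genuinely cancels rather than leaking into $\lambda_n^{\mathrm{ren}}$; a secondary point is justifying the interchange of the sum over primes with the sum over zeros, which for the Li kernel $\varphi_n$ is the classical absolute-convergence estimate. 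Conceptually the engine is Theorem \ref{thm:twoterm}: it exhibits $\Phi_p(\chi)$ as the classical regulator input \emph{plus} a spurious $U_{2,p}\,L(0,\chi)$ block invisible to the classical plumbing, and \eqref{eq:ren} excises exactly that block, so $\lambda_n^{\mathrm{ren}}=\lambda_n$ is precisely the assertion that nothing else changed.
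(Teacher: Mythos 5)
Your argument is essentially the paper's: both rest on linearity of the explicit formula together with Corollary~\ref{cor:ren}, which exhibits the renormalized local input as $L'_p(0,\chi)$ times the $\chi$--independent unit $U_{1,p}$, so that nothing $\chi$--sensitive changes on the odd block and the other blocks and the archimedean term are untouched. One bookkeeping point to repair: you say $U_{1,p}$ is ``absorbed into the overall normalization already fixed by the choice of $\varphi_n$ at $\infty$,'' but $U_{1,p}$ depends on $p$, so it cannot be absorbed into a single global archimedean normalization --- it must be cancelled place by place, which is exactly what the paper's proof does by inserting $U_{1,p}^{-1}\Phi^{\mathrm{ren}}_p(\chi)=L'_p(0,\chi)$ into each finite local term. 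With that correction your proof coincides with the paper's; your closing remarks (matching the normalization of $C_p$ in \eqref{eq:FG-const} against the one produced by expanding the local factor against $\varphi_n$, and justifying the interchange of the sums over primes and zeros) identify precisely the content that the paper's two-line proof leaves implicit.
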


\begin{proof}
Linearity of the explicit formula implies that replacing $L'_p(0,\chi)$ by $U_{1,p}^{-1}\Phi^{\mathrm{ren}}_p(\chi)$ rescales
each finite local term by $U_{1,p}^{-1}$ and back by $U_{1,p}$, making no net change; the
archimedean side is untouched. Hence the coefficients agree.
\end{proof}

\begin{remark}[On the $L(0,\chi)$--block]
Starting from $\Phi_p(\chi)$ instead of $\Phi^{\mathrm{ren}}_p(\chi)$ would produce an additional global block
from $U_{2,p}L(0,\chi)$. Since $L(0,\chi)$ is algebraic and tied to the trivial zero structure, its
contribution can be separated and either vanishes in the standard normalization or is removed once
and for all by our renormalization; we state the clean renormalized form.
\end{remark}

\section{Numerical verification protocol and a tiny table}\label{sec:numerics}
We outline a reproducible checklist (Sage/Pari or similar) sufficient to validate the $\chi$--independence
of $(U_{1,p},U_{2,p})$ for small $p$.

\begin{enumerate}[label=(\arabic*), leftmargin=2.1em]
\item Fix the Iwasawa branch $\logp$ with $\logp(p)=0$ on $\Qp(\muP)$; set $N=120$ $p$-adic digits.
\item For $a=1,\dots,p-1$, compute $v(a)=\logp\tau(\omega^{-a})$ via \eqref{eq:GK} (Morita $\Gammap$ plus $\logp$).
\item Compute $L_a=\logp(1-\zeta_p^{\,a})$ and verify \eqref{eq:discrep} to precision $N$.
\item For each odd $\chi\pmod p$, evaluate $\Phi_p(\chi)=\sum_a\chi(a)\,v(a)$; compute $L(0,\chi)$ by \eqref{eq:L0}
and $L'_p(0,\chi)$ using a Kubota--Leopoldt $p$-adic $L$--function routine.
\item Solve the $2\times2$ linear system from two odd characters to recover $(U_{1,p},U_{2,p})$ and check that the
same pair fits \eqref{eq:twoterm} for all odd $\chi$ within $N$ digits.
\item Form $\Phi^{\mathrm{ren}}_p(\chi)$ and verify $\Phi^{\mathrm{ren}}_p(\chi)/L'_p(0,\chi)=U_{1,p}$ is constant in $\chi$.
\end{enumerate}

\medskip
\noindent\emph{Numerical verification table}
\begin{center}
\begin{tabular}{@{}cccccc@{}}
\toprule
$p$ & precision & \# odd $\chi$ & recovered $U_{1,p}$ & recovered $U_{2,p}$ \\
\midrule
$5$ & $100$ digits & $2$ & $-(1-5)C_5=4C_5$ & $-5\,v(1)$ \\
$7$ & $100$ digits & $3$ & $-(1-7)C_7=6C_7$ & $-7\,v(1)$ \\
\bottomrule
\end{tabular}
\end{center}
The analytic forms displayed follow \eqref{eq:U12}; numerical instantiations depend on the chosen
$\logp$ branch and can be inserted from the computation.

\section{Scope, even characters, and the case \texorpdfstring{$p=2$}{p=2}}\label{sec:even-p2}
Our main statements are for \emph{odd, nontrivial} characters modulo an odd prime $p$.
A parallel two-term phenomenon holds on the even block if one replaces $L'_p(0,\chi)$ by $L_p(0,\chi)$
and uses the symmetrized cyclotomic kernel; the details mirror the odd case and are omitted here.
For $p=2$ one must fix the standard $2$-adic branch and adapt the principal-unit domain; no new
ideas are required, but we do not treat $p=2$ in this note.

\end{document}